\documentclass{amsart}
 
 \usepackage{amscd}
 \usepackage{amssymb}
  \usepackage[curve,matrix,arrow]{xy}

%
%
\usepackage{amsthm}



\newtheorem{thm}[equation]{Theorem}
\newtheorem{lem}[equation]{Lemma}
\newtheorem{cor}[equation]{Corollary}
\newtheorem{prop}[equation]{Proposition}

\newtheorem*{thm*}{Theorem}
\newtheorem*{prop*}{Proposition}
\newtheorem*{cor*}{Corollary}
\newtheorem*{lem*}{Lemma}
\newtheorem*{MT*}{Main Theorem}


\theoremstyle{definition} %

\newtheorem*{defn*}{Definition}

\newtheorem{eg}[equation]{Example}

\theoremstyle{remark} %
\newtheorem{rmk}[equation]{Remark}

\newtheorem*{rmk*}{Remark}
\newtheorem*{rmks*}{Remarks}

%
%
\newtheoremstyle{exercise}
  {3pt}
  {3pt}
  {\small}
  {}
  {\sc\small}
  {.}
  {.5em}
   {}     
  {}

\theoremstyle{exercise}

%
  
%
%

%
%
%
\makeatletter
{\renewcommand{\theequation}{#1}}%
{\renewcommand{\theequation}{\arabic{equation}}\addtocounter{equation}{-1}\global\@ignoretrue}
\makeatother

\makeatletter
{\renewcommand{\theequation}{#1}\begin{eqnarray}}%
{\end{eqnarray}\renewcommand{\theequation}{\arabic{equation}}\addtocounter{equation}{-1}\global\@ignoretrue}
\makeatother

%
%
\makeatletter
{\smallskip \refstepcounter{equation}\noindent{\textbf{\theequation.} }{{\textbf{#1.}}}}%
{\smallskip \global\@ignoretrue}
\makeatother

%
%
\makeatletter
{\smallskip \refstepcounter{equation}{\sc \theequation}{\sc (#1).}}%
{\smallskip \global\@ignoretrue}
\makeatother

%
%
\makeatletter
{\smallskip \refstepcounter{equation}\noindent{\sc \theequation.}{\sl{ #1.}}}%
{\smallskip \global\@ignoretrue}
\makeatother

%
%
\makeatletter
\newenvironment{borel*}%
{\smallskip \refstepcounter{equation}\noindent{\textbf{\theequation.}}}%
{\global\@ignoretrue}
\makeatother

%
%
\newcommand{\flist}[1]{\hangindent\leftmargini\textup{(1)}\hskip\labelsep {#1}%
\begin{enumerate}%
\setcounter{enumi}{1}%
}

\newcommand{\ot}{\otimes}

\newcommand{\darkrad}{0.115}
\newcommand{\lrad}{0.25}

\setlength{\unitlength}{.4in}

\newcommand{\eand}{\quad\text{and}\quad}

%
%
\newcommand{\Z}{{\mathbb{Z}}}        

\newcommand{\Zm}[1]{\Z/{#1}\Z}

\newcommand{\la}{\lambda}
\newcommand{\D}{\Delta}


\newcommand{\proj}{{\mathbb{P}}}     

\newcommand{\oddots}{{\mathinner{\mkern1mu\raise1pt\vbox{\kern7pt\hbox{.}}\mkern2mu\raise4pt\hbox{.}\mkern2mu\raise7pt\hbox{.}\mkern1mu}}}

%
%
%
%
\newcommand{\s}{\sigma}

\newcommand{\As}{{(A,\sigma)}}

%
%

%
%
\newcommand{\Bt}{{(B,\tau)}}

%
%

\newcommand{\Fx}{{F^{\times}}}
\newcommand{\Fxsq}{{F^{\times2}}}

\newcommand{\Fsq}{{\Fx/\Fxsq}}

%
%


\newcommand{\qform}[1]{{\left\langle{#1}\right\rangle}}                   


%
%
\newcommand{\basemu}{\boldsymbol{\mu}}
\newcommand{\mmu}[1]{\basemu_{#1}}     

\DeclareMathOperator{\Spin}{Spin}           
\newcommand{\Sp}{\mathrm{Sp}}
\DeclareMathOperator{\SL}{SL}

\newcommand{\SO}{\mathrm{SO}}
\newcommand{\SU}{\mathrm{SU}}

\DeclareMathOperator{\Ad}{Ad}

%
%





\DeclareMathOperator{\Int}{Int}

\DeclareMathOperator{\disc}{disc}

%
%
\DeclareMathOperator{\Br}{Br}

%
%

%
%
\newcommand{\Nrd}{{\mathrm{Nrd}}}

%
%

%
%


%
%

\newcommand{\stbtmat}[4]{\left( \begin{smallmatrix} #1&#2 \\ #3&#4 \end{smallmatrix} \right) }

 \numberwithin{equation}{section}
 
\numberwithin{figure}{section}
\numberwithin{table}{section}

\setcounter{tocdepth}{1}

\newcommand{\Falg}{F_{{\mathrm{alg}}}}
 
\renewcommand{\D}{\mathcal{D}}

\DeclareMathOperator{\Sym}{Sym}
\DeclareMathOperator{\ad}{ad}

\newcommand{\binv}{{\bar{\ }}}
 
\begin{document}
 \title[Pfister's Theorem for involutions of degree 12]{Pfister's Theorem for orthogonal involutions of degree 12}
 
 \author{Skip Garibaldi}
\address{Department of Mathematics \& Computer Science, Emory University, Atlanta, GA 30322, USA}
\email{skip@member.ams.org}
\urladdr{http://www.mathcs.emory.edu/{\textasciitilde}skip/}

\author{Anne Qu\'eguiner-Mathieu} 
\address{
Universit\'e Paris 13 (LAGA) \\
CNRS (UMR 7539)\\
Universit\'e Paris 12 (IUFM) \\
93430 Villetaneuse\\
France}
\email{queguin@math.univ-paris13.fr}
\urladdr{http://www-math.univ-paris13.fr/{\textasciitilde}queguin/}



\setlength{\unitlength}{.75cm}

\begin{abstract}
We use the fact that a projective half-spin representation of $\Spin_{12}$ has an open orbit to
 generalize Pfister's result on quadratic forms of dimension 12 in
$I^3$ to orthogonal involutions.
\end{abstract}

\maketitle

In his seminal paper \cite{Pfister}, Pfister proved strong theorems
describing quadratic forms of even dimension $\le 12$ that have
trivial discriminant and Clifford invariant, i.e., that are in $I^3$.
His results have been extended to quadratic forms of dimension 14 in
$I^3$ by Rost, see \cite{Rost:14.1} or \cite{G:lens}.  One knows also
extensions of these theorems where quadratic forms are replaced by 
central simple algebras with orthogonal involution, except in degree
$12$.  The purpose of this paper is to complete this picture by giving
the extension in the degree 12 case.  The principle underlying the quadratic forms results and our extension is that a projective half-spin representation of $\Spin_n$ for even $n$ has an open orbit precisely for $n \le 14$, cf.~\cite{Rost:14.1}, \cite{Igusa}, and \cite{SK}.

Let us first recall what is already known. We consider quadratic forms
$q$ of even dimension (resp.~central simple algebras with orthogonal
involution $\As$ of even degree) with trivial discriminant and
Clifford invariant.  If $q$ has dimension $< 8$, then $q$ is
hyperbolic by the Arason-Pfister Hauptsatz \cite[X.5.1]{Lam}.
This also holds in the non split case: if $A$ has degree $< 8$, then
$\s$ is hyperbolic, see e.g.~\cite[1.4]{G:16} or~\cite[4.4]{Q:03}.   If $q$ has dimension 8, then $q$ is similar to a 3-Pfister form \cite[X.5.6]{Lam}; if $A$ has degree 8, then $\As$ is isomorphic to a tensor product $\ot_{i=1}^3 (Q_i, \s_i)$ of quaternion algebras with orthogonal involution \cite[42.11]{KMRT}.   If $A$ has degree 10 or 14, then $A$ is necessarily split \cite[1.5]{G:16}, so there is no interesting generalization of the theorem on quadratic forms.  The remaining case is where $q$ has dimension 12, where Pfister proved that $q$ is isomorphic to $\phi \ot \psi$ for some 1-Pfister $\phi$ and 6-dimensional form $\psi$ with trivial discriminant, see \cite[pp.~123, 124]{Pfister} or \cite[17.13]{G:lens}.  In Theorem \ref{complete} below, we prove an analogous statement for $\As$ in case $A$ has degree 12.  We do not use Pfister's theorem on 12-dimensional quadratic forms in our proof, so we obtain his result as a corollary (Corollary \ref{Pfister}).


\smallskip
The paper comes in three sections. 
First, we study quadratic extensions of algebras with involution, and
in particular orthogonal extensions of unitary involutions, as
considered in~\cite[App.~2]{BP}, \cite[\S3]{ElomaryTig}, and
\cite[2.14]{QT}.  Second, we show how to construct an algebra
with orthogonal involution that has trivial discriminant and Clifford invariant from
any exponent $2$ algebra with unitary involution. 
Third, 
we prove that in degree $12$, this construction produces every central
simple algebra with orthogonal involution that has trivial discriminant and Clifford invariant.


In the language of linear algebraic groups, our results are as follows.  Our construction takes a group of type $^2\!A_{n -1}$ (e.g., $\SU_n$) and produces a group of type $^1\!D_n$ (e.g., $\Spin_{2n}$).  Our Theorem \ref{complete} shows that, over a field $F$, every algebraic group of type $D_6$ with a half-spin representation defined over $F$ is obtained by our construction.

\subsection*{Global conventions.}  We work over a base field $F$ of
characteristic $\ne 2$. 
Typically, we use the notation and basic language of \cite{KMRT} without comment.  
A few main points are: We extend the notions of central simple algebra and Brauer-equivalence over a quadratic field extension $K/F$ in an obvious way to include also the case where $K$ is the split \'etale algebra $F \times F$.
For $a \in A^\times$, we write $\Int(a)$ for the map $x \mapsto axa^{-1}$ on $A$.

Let $\As$ be a central simple algebra of even degree with orthogonal
involution.  The (signed) \emph{discriminant} of $\s$ is an element of
$\Fsq = H^1(F, \mmu2)$.  If the discriminant of $\s$ is trivial, then
the even Clifford algebra of $\As$ is a product $C_+ \times C_-$ of
central simple algebras such that $[C_+] - [C_-]$ equals $[A]$ in the
Brauer group of $F$ (see for instance~\cite[(9.12)]{KMRT}).  The
\emph{Clifford invariant} is the class of $[C_+]$ or $[C_-]$ in the
quotient $\Br(F) / [A]$. 

Following Becher, for any $n$-dimensional quadratic form $q$ over
$F$, we write $\Ad_q$ for the split algebra with
adjoint involution $(M_n(F),\ad_q)$. 
 
\section{Quadratic extensions of algebras with involution}

\begin{borel*} \label{DtoA}
Let $\As$ be a central simple $F$-algebra with involution,
and suppose that $A$ contains a $\sigma$-stable quadratic \'etale
$F$-algebra $K$. 
One can write $K$ as $F[\delta]/(\delta^2-d)$ for some $d \in
\Fx$, so this hypothesis is equivalent to saying: there is a
non-central $\delta
\in A$ such that $\s(\delta) = \pm\delta$ and $\delta^2 \in \Fx$.  
The centralizer $B$ of $K$ in $A$ is a central simple
$K$-algebra Brauer-equivalent to $A \ot K$, and of degree $\frac12
\deg(A)$. We write $\tau$ for the involution on $B$ induced by $\s$,
and 
we say that $(A,\sigma)$ is a {\em quadratic
  extension} of $(B,\tau)$. 
Note in particular that if $A$ and $B$ are division, $A$ is a
quadratic internal extension of $B$ as in Dieudonn\'e~\cite{Dieu:ext}. 

\begin{eg}
\label{quat.eg}
Let $(Q,\gamma)$ be a quaternion $F$-algebra, with involution of the first
kind. For any
$\gamma$ symmetric or skew-symmetric pure quaternion $i$, $(Q,\gamma)$
is a quadratic extension of $(F(i),\gamma_{|F(i){}})$. 
\end{eg} 

\begin{eg}
\label{descent.eg}
Consider now another algebra with involution of the first kind
$(B_0,\tau_0)$ over $F$; the tensor product $\As=(B_0,\tau_0)\ot(Q,\gamma)$ is
a quadratic extension of
$(B,\tau)=(B_0,\tau_0)\ot(F(i),\gamma_{|F(i){}})$. 
\end{eg}

\begin{eg} In particular, if a quadratic form $q$ decomposes as 
$\psi\ot\qform{1,-d}$, the algebra with involution
$\Ad_q$ is
  a quadratic extension of
  ${\Ad_\psi}\otimes_F(F(\sqrt{d}),\binv)$. 
\end{eg} 

\begin{borel*} \label{image} 
We will now restrict our attention to extensions of $K/F$-unitary
involutions, that is we assume $K=F[\delta]$ with $\delta^2=d$ and
$\s(\delta)=-\delta$. 
If $A$ is not split, $\s$ is not orthogonal, or $\s$ has even Witt index (e.g., if $\s$ is anisotropic), then by \cite[3.3]{BST}, the existence of such a $\delta$ is equivalent to the statement \emph{$\s$ is hyperbolic over $F$ or a quadratic extension of $F$}.
\end{borel*}

One remarkable fact about the situation in \ref{DtoA} is that the involution $\s$ is uniquely determined by its
restriction $\tau$: 
\begin{lem} \label{unique}
Let $(A,\s)$ be a quadratic extension of $(B,\tau)$, and assume that $\tau$ is
$K/F$ unitary. The only involutions on $A$ that agree with $\tau$
on $B$ are $\s$ and $\Int(\delta)\circ \s$. In particular, there is a
unique orthogonal (resp.~symplectic) involution on $A$ that is 
$\tau$ on $B$.  
\end{lem}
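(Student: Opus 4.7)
The plan is to parametrize all involutions $\sigma'$ of $A$ extending $\tau$ by measuring how they differ from $\sigma$. Given such a $\sigma'$, the composition $\alpha := \sigma' \circ \sigma$ is an $F$-algebra automorphism of $A$, and since $\sigma$ and $\sigma'$ both restrict to $\tau$ on $B$ with $\tau^2 = \mathrm{id}$, the automorphism $\alpha$ restricts to the identity on $B$. By Skolem--Noether, $\alpha = \Int(u)$ for some $u \in A^\times$, and the condition $\alpha|_B = \mathrm{id}$ forces $u \in C_A(B)^\times$. The double centralizer theorem applied to the \'etale subalgebra $K \subset A$ identifies $C_A(B) = C_A(C_A(K)) = K$, so $\sigma' = \Int(u)\circ\sigma$ for some $u \in K^\times$.

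Next I impose that $\sigma'$ is an involution. A direct computation using $\sigma\circ\Int(u) = \Int(\sigma(u)^{-1})\circ\sigma$ gives $(\Int(u)\circ\sigma)^2 = \Int(u\,\sigma(u)^{-1})$, so the requirement is that $u\,\sigma(u)^{-1}$ lie in the center $F^\times$. Writing $u = a + b\delta$ with $a,b\in F$ and using $\sigma(\delta) = -\delta$, I have $u\,\sigma(u) = a^2 - b^2 d \in F^\times$, so the condition reduces to $u^2 \in F$; since $u^2 = a^2 + b^2 d + 2ab\delta$, this forces $ab = 0$. Thus either $u \in F^\times$ (giving $\sigma' = \sigma$) or $u \in F^\times\delta$ (giving $\sigma' = \Int(\delta)\circ\sigma$), which proves the main assertion.

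For the ``in particular'' clause, I use that $\sigma(\delta) = -\delta$, so $\delta$ is $\sigma$-skew, and invoke the standard fact (cf.\ \cite[\S2]{KMRT}) that twisting a first-kind involution by $\Int(v)$ with $v$ a $\sigma$-skew unit reverses the orthogonal/symplectic type. It follows that $\sigma$ and $\Int(\delta)\circ\sigma$ have opposite types, giving a unique orthogonal and a unique symplectic involution extending $\tau$.

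I expect the main obstacle to be primarily conceptual rather than computational: the crux is recognizing that $\sigma'\sigma^{-1}$ is forced to be inner conjugation by an element of the tiny commutative algebra $K$, after which both the involution condition and the type question collapse into calculations inside $K$. The only subtle point is correctly tracking the type flip in the last step, which is unavoidable since otherwise the two candidate involutions could a priori share the same type.
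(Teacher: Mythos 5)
Your proof is correct and follows essentially the same route as the paper's: both reduce to $\sigma' = \mathrm{Int}(u)\circ\sigma$ for a unit $u$ in the centralizer of $B$, identify that centralizer with $K$ via the double centralizer theorem, conclude that $u$ lies in $F^\times$ or $F^\times\delta$, and read off the type flip from the skewness of $\delta$. The only cosmetic difference is that the paper invokes the standard parametrization of first-kind involutions (with $\sigma(a)=\pm a$) directly, whereas you rederive that condition by computing $(\mathrm{Int}(u)\circ\sigma)^2$ inside $K$.
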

\begin{proof} If $\tau$ is $K/F$ unitary, then in particular it acts
  on $F$ as the identity, and every involution on $A$ that agrees with $\tau$
  on $B$ is of the first kind. Let $\s'$ be such an involution, and
  pick $a \in A^\times$ such that $\s(a) = \pm a$ and $\s' = \Int(a)
  \circ\s$.  
Since $\s'$ and $\s$ agree on $B$, the element $a$ centralizes $B$ and
  by the Double Centralizer Theorem belongs to $K$.  If $\s(a) = a$,
  this means $a$ is in $F$ and $\s' = \s$. Otherwise, $a\in \delta F$
  and $\s'=\Int(\delta)\circ\s$ and the type of $\s'$ is opposite that of $\s$.
  \end{proof}
\begin{rmk} 
It is obvious, both from the proof above and from
  Example \ref{quat.eg} that this is not true anymore if $\tau$ is
  of the first kind.
\end{rmk}
\end{borel*}

\begin{borel*} \label{QT}
Lemma~\ref{unique} shows that $\s$ is completely determined by its
restriction to $B$. We now describe an explicit procedure to recover
$\As$ from $\Bt$ if we know the Brauer class of $A$ and the type of $\s$, thus proving: 
\end{borel*}

\begin{prop} \label{existence.prop}
Let $K$ be a quadratic \'etale $F$-algebra and $\Bt$ a
  central simple $K$-algebra with unitary $K/F$-involution. 
We assume $B$ has exponent $2$, and we
fix a Brauer class $\beta\in\Br_2(F)$ whose restriction to $K$ is the
class of $B$.
There exists a unique orthogonal (resp.~symplectic) quadratic extension $\As$ of $\Bt$ with Brauer
class $\beta$. 
\end{prop}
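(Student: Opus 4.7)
Uniqueness follows cleanly from Lemma~\ref{unique} combined with Skolem--Noether.  Given two orthogonal quadratic extensions $(A,\sigma)$ and $(A',\sigma')$ of $(B,\tau)$ with $[A]=[A']=\beta$, they have equal degree $2\deg_K(B)$ and Brauer class, so $A\cong A'$ as $F$-algebras.  Pick any isomorphism $\Phi_0\colon A\to A'$; since $B$ is simple as an $F$-algebra, Skolem--Noether applied to the two $F$-embeddings of $B$ into $A'$ (via $\Phi_0$ and via the given inclusion) yields $u\in A'^{\times}$ such that $\Phi := \Int(u^{-1})\circ\Phi_0$ restricts to $\mathrm{id}_B$.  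Then $\Phi^{-1}\sigma'\Phi$ is an orthogonal involution on $A$ agreeing with $\tau$ on $B$, so by Lemma~\ref{unique} it equals $\sigma$, and $\Phi$ is an isomorphism of quadratic extensions.  The symplectic case is identical.

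For existence, I would split the construction in two stages.  First, produce the underlying $F$-algebra: by the standard criterion for embedding a separable quadratic subfield, $K$ embeds in any central simple $F$-algebra $A$ of Brauer class $\beta$ and degree $2\deg_K(B)$, since $\operatorname{ind}(A\otimes_F K)=\operatorname{ind}(B)$ divides $\deg_K(B)$.  The double centralizer theorem identifies the centralizer of $K$ in $A$ with a central simple $K$-algebra of the same degree and Brauer class as $B$, hence with $B$ itself.  Second, construct a first-kind involution $\sigma$ on $A$ with $\sigma|_B=\tau$; given one, Lemma~\ref{unique} automatically provides both type choices via $\sigma$ and $\Int(\delta)\circ\sigma$.

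The hard step is this final descent.  First-kind involutions on $A$ exist because $\beta\in\Br_2(F)$, and Skolem--Noether lets us adjust any one to stabilize $K$ and to restrict to $\iota$ there, but forcing the restriction to $B$ to equal the prescribed $\tau$ requires a cohomological argument that uses the exponent-$2$ hypothesis on $B$.  This is precisely the construction carried out in the references \cite[App.~2]{BP}, \cite[\S3]{ElomaryTig}, and \cite[2.14]{QT} cited in the paper's introduction, where the family of quadratic extensions of $(B,\tau)$ is parameterized by $c\in F^{\times}/N_{K/F}(K^{\times})$ and the Brauer class varies by the quaternion class $(d,c)_F$ as $c$ runs through $F^\times$, so that every admissible $\beta$ is attained exactly once.
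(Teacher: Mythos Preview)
Your uniqueness argument is correct and in fact more detailed than the paper's, which simply invokes Lemma~\ref{unique} at the end.  One quibble: your appeal to Skolem--Noether uses that $B$ is simple as an $F$-algebra, which fails when $K=F\times F$; in that degenerate case both extensions are hyperbolic (see the example following the proposition), so uniqueness is easy, but you should flag this.

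Your existence argument, however, is not really a proof: you correctly isolate the hard step---producing a first-kind involution on $A$ whose restriction to $B$ is the \emph{given} $\tau$---and then defer it entirely to the literature, together with an (accurate) summary of the parameterization those references establish.  That is a citation, not an argument, and the proposition is precisely the place where this construction should be carried out.

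The paper's proof is genuinely different in strategy and avoids your difficulty.  Rather than first building $A$ and then searching for $\sigma$ on it, the paper builds $A$ and $\sigma$ simultaneously from data living in $B$.  Since $B$ has exponent $2$, it carries a first-kind involution $\nu$ of the desired type; one finds $u\in B^\times$ with $\nu(u)=\tau(u)=u$ and $(\nu\tau)^2=\Int(u)$, and sets $A=B\oplus Bz$ with $z^2=u$ and $zb=(\nu\tau)(b)z$.  The involution is then defined in one stroke by $\sigma(b_1+b_2z)=\tau(b_1)+z\tau(b_2)$, and a dimension count on symmetric elements shows $\sigma$ has the same type as $\nu$.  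The Brauer class is adjusted to $\beta$ by rescaling $u\mapsto\lambda u$ for suitable $\lambda\in F^\times$ (this is where \cite[13.41]{KMRT} enters).  The advantage of this route is that the restriction $\sigma|_B=\tau$ is automatic from the formula for $\sigma$, so the ``hard descent step'' in your outline never arises.
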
 

\begin{proof} 
We prove existence, beginning as in \cite[pp.~380, 381]{ElomaryTig}:
Since $B$ has
exponent $2$, it has an involution $\nu$ of the first kind of the desired type (orthogonal or symplectic).   One can find an element $u \in B^\times$ such that 
\begin{equation} \label{QT.1}
\nu(u) = \tau(u) = u \eand (\nu\tau)^2(x) = uxu^{-1} \quad (x \in B).
\end{equation}
We define an $F$-algebra $A_1$ to be the vector space $B \oplus Bz_1$ with multiplication rules
\[
z_1^2 = u \eand z_1b = (\nu\tau )(b)\,z_1.
\]
It is a central simple $F$-algebra \cite[Chap.~11, Th.~10]{Albert}. 
Moreover, the centralizer of $K$ in $A_1$ is $B$. 
So the Brauer class of $A_1\ot_F K$
is the class of $B$, that is $\beta$ extended to $K$, and there is some choice of $\la\in\Fx$ such that $A_1 \ot (K, \la)$ has Brauer-class $\beta$.  

Equations \eqref{QT.1} only determine $u$ up to a central factor. By \cite[13.41]{KMRT} replacing $u$ by $\la u$ in the
construction above produces an algebra $A=B\oplus Bz$ with Brauer class $\beta$.

This algebra $A$ is endowed with an involution $\s$ defined by
\begin{equation} \label{s.def}
\s(b_1 + b_2 z) := \tau(b_1) + z \tau(b_2) \quad (b_1, b_2 \in B).
\end{equation}
We show that this involution has the same type as $\nu$.
Write $\Sym\As$ and $\Sym\Bt$ for the subspaces of symmetric elements.  Clearly,
\[
\Sym \As = \Sym \Bt \oplus z \Sym (B, \nu).
\]
So the dimension over $F$ of $\Sym \As$ is 
\[
\dim_F \Sym \As = m^2 + m (m + \varepsilon) = \frac{2m (2m + \varepsilon)}{2},
\]
where $m$ is the degree of $B$, and 
$\varepsilon=\pm 1$ the type of $\nu$. 
This proves that $\s$ also is of type $\varepsilon$.

By Lemma \ref{unique}, the isomorphism class of $\As$ depends only on $\Bt$, $\beta$, and the type of $\s$,  and not on  the particular choices of $u$ and $\nu$.
\end{proof} 

\begin{eg}
If $\Bt$ is hyperbolic (e.g., if $K=F\times F$), then $B$ contains an
idempotent $e$ satisfying $\tau(e)=1-e$; the same equalities also hold
in $A$, so $\As$ is hyperbolic \cite[2.1]{BST}.
\end{eg}

\begin{eg} \label{descent} 
Assume that $(B,\tau)$ has a descent, that is $B=B_0\otimes_F K$ and 
$\tau=\tau_0\otimes\binv$ for some central simple algebra $B_0$ over $F$ 
with involution $\tau_0$ of the first kind.  (For example, this holds if $B$ is split.)
Pick $\la\in\Fx$ such that $\beta$ is the Brauer class of $B_0\ot (K,\lambda)$. 
The algebra with involution $(B,\tau)$ has, up to isomorphism, exactly two extensions with Brauer
class $\beta$, one of orthogonal type and one of symplectic type. 
They both decompose as
\begin{equation} \label{descent.2} 
(A,\sigma)=(B_0,\tau_0)\otimes((K,\lambda),\gamma),
\end{equation}
where either $\gamma$ is the only orthogonal involution
on $(K,\lambda)$ that acts as $\binv$ on $K$, and $\s$ is of
the same type as $\tau_0$, or $\gamma$ is the canonical
involution on $(K,\lambda)$, and $\s$ and $\tau_0$ are of opposite
type. 
This follows directly from Example~\ref{descent.eg} and Proposition~\ref{existence.prop}. 
\end{eg}

\begin{rmk}
We now sketch the relationship between $\Bt$ and $\As$ from the perspective of algebraic groups.  Write $G$ for the group $\SO\As$ if $\s$ is orthogonal, resp.~$\Sp\As$ if $\s$ is symplectic, and suppose that $K = F[\delta]$ is a field.  Over a separable closure of $F$, the eigenspaces $V, W$ of $\delta$ are parallel totally isotropic subspaces.  The stabilizers $P, Q$ of $V, W$ in $G$ are maximal parabolic subgroups defined over $K$.  Their intersection consists of the elements of $G$ that stabilize both $V$ and $W$, i.e., that commute with $\delta$.  It follows that the intersection $L := P \cap Q$ is an $F$-defined subgroup of $G$.  Its $F$-points are 
\[
L(F) = \{ b \in B \mid \text{$\tau(b)b = 1_B$ and $N_{K/F}(\Nrd_B(b)) = 1_F$} \}.
\]
The group $L$ is reductive with center the norm 1 elements of $K^\times$ and derived subgroup $\SU\Bt$.
Obviously, $L$ is a Levi subgroup of $P$ and $Q$, hence $P$ and $Q$ are opposite parabolics.  Moreover, the nontrivial $F$-automorphism of $K$ interchanges the eigenspaces of $\delta$ hence also $P$ and $Q$. 

In this section, we have described how to construct $\As$ from $\Bt$; finding $\Bt$ in $\As$ is a triviality given $K$.  The second, easier direction is also standard from the viewpoint of algebraic groups:  If we assume that $\s$ is hyperbolic over a quadratic extension $K/F$, then \cite[p.~383, Lemma 6.17$'$]{PlatRap} gives the existence of opposite parabolic subgroups $P$ and $Q$ over $K$  like those in the previous paragraph. (But note that this result requires $P$ and $Q$ to be conjugate, so it does not apply in case $\s$ is orthogonal and $\deg A \equiv 2 \bmod{4}$.)
\end{rmk}

\section{Orthogonal extensions of $\Bt$ and their invariants}  

From now on, we consider algebras with {\em orthogonal} involution
$\As$ that are quadratic extensions of some unitary $\Bt$. 
Suppose further that $B$ has even degree, so that 
the degree of $A$ is divisible by $4$. 

\begin{borel*}  \label{invts}
We claim that \emph{the
  discriminant of $\s$ is trivial} and \emph{the class of the
  discriminant algebra of $\Bt$ and the Clifford invariant of $\As$
  agree in $H^2(F, \mu_2)/[A]$.}  It suffices to check these claims
when $A$ is split, as can be seen by extending scalars to the function
field of the Severi-Brauer variety of $A$.  In that case, $\As$ is
given by \eqref{descent.2} where $B_0$ is split, and $\tau_0$ and
$\gamma$ are orthogonal.  The discriminant of $\s$ is obviously
trivial.  Further, the even Clifford algebra of $\As$ is $(K, \disc
\tau_0)$ by \cite[V.3.15, V.3.16]{Lam} or \cite[p.~147]{KMRT}, and the discriminant algebra of $(B, \tau)$ is also Brauer-equivalent to $(K, \disc \tau_0)$ by \cite[10.33]{KMRT}.
\end{borel*}

Hence, $(A,\s)$ has trivial discriminant and Clifford invariant if and only if 
the discriminant algebra of $(B,\tau)$ is split or Brauer equivalent
to $A$. We have proved:
\begin{lem} \label{rho.lem}
For any even degree algebra $\Bt$ with unitary $K/F$ involution, the 
unique orthogonal quadratic extension of $\Bt$ Brauer equivalent to
the discriminant algebra $\D\Bt$ has trivial discriminant and
Clifford invariant.   $\hfill\qed$
\end{lem}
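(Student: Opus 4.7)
The plan is to assemble the lemma directly from Proposition~\ref{existence.prop} together with the discussion in~\ref{invts}. Set $\beta := [\D\Bt] \in \Br(F)$; the content of the lemma is that this $\beta$ is a legitimate choice of Brauer class for Proposition~\ref{existence.prop} and that the resulting orthogonal extension then has trivial Clifford invariant by the identification carried out in~\ref{invts}.

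First I would recall from \cite{KMRT} that the discriminant algebra $\D\Bt$ of an even-degree central simple $K$-algebra with unitary $K/F$-involution has exponent dividing $2$, so $\beta$ indeed lies in $\Br_2(F)$. Second, I would invoke the computation already cited in~\ref{invts} (via \cite[10.33]{KMRT}) that $\D\Bt \ot_F K$ is Brauer-equivalent to $B$, which says precisely that $\beta_K = [B]$ in $\Br(K)$. In particular this forces $B$ itself to have exponent dividing $2$, so the hypotheses of Proposition~\ref{existence.prop} are all in place, and that proposition produces a (unique, up to isomorphism) orthogonal quadratic extension $\As$ of $\Bt$ with $[A] = \beta = [\D\Bt]$. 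This $\As$ is the one named in the statement of the lemma.

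Finally I would apply~\ref{invts}: for any orthogonal quadratic extension of a unitary involution, the discriminant of $\s$ is automatically trivial, and the Clifford invariant coincides with the class of $[\D\Bt]$ in the quotient $\Br(F)/[A]$. By construction $[A] = [\D\Bt]$, so this class vanishes, which is exactly the assertion of the lemma. The only real content is the identification $[\D\Bt \ot_F K] = [B]$, which was already cited in~\ref{invts}; once that is in hand, the proof is bookkeeping and no separate obstacle arises.
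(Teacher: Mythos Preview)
Your proposal is correct and follows exactly the paper's route: the lemma is printed there with only a $\qed$, as the sentence ``We have proved:'' indicates it is an immediate corollary of \ref{invts} (trivial discriminant, and Clifford invariant $\equiv [\D\Bt]$ in $\Br(F)/[A]$) together with the existence and uniqueness furnished by Proposition~\ref{existence.prop}; you have merely spelled out the appeal to the latter.

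One small correction on your bookkeeping: the fact ``$\D\Bt \otimes_F K$ is Brauer-equivalent to $B$'' is not what \cite[10.33]{KMRT} is invoked for in \ref{invts}---that citation computes $[\D\Bt]$ over $F$ in the split descent situation.  In general $\D\Bt \otimes_F K \cong \lambda^m B$ (for $\deg B = 2m$), whose Brauer class is $m[B]$, so your deduction of $\exp B \le 2$ from this identity is circular.  The paper sidesteps this because Section~2 opens by \emph{starting} from an orthogonal $\As$ that is already a quadratic extension of $\Bt$; in that standing context $[B] = [A\otimes_F K]$ has exponent at most $2$ automatically, and then $m[B]=[B]$ whenever needed (in particular for the application with $m=3$).
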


The orthogonal quadratic extensions given by Lemma \ref{rho.lem} can have index 1, 2, or 4, and all three are possible.  Indeed, the discriminant algebra $\D\Bt$ has index dividing 4 \cite[10.30]{KMRT}.  Examples where $B$ has exponent 2 (as in \ref{existence.prop}) and $\D\Bt$ has index 4 can be found in
\cite[p.~145, Exercise 13]{KMRT} or can easily be constructed from Example \ref{descent}, even over some fields of cohomological dimension $\le 2$.  

\begin{eg} \label{quad.split} 
Suppose that $B$ is split and has even degree $n$, and the
discriminant algebra $\D\Bt$ is also split.  Then $\Bt$ descends as in
Example \ref{descent}, with $\tau_0$ adjoint to a quadratic form $\psi_0
\cong \qform{\alpha_1, \alpha_2, \ldots, \alpha_n}$.  As the
discriminant algebra is split, the discriminant of $\psi_0$ is a norm
from $K$.  So the form $\psi  := \qform{ (\disc \psi_0) \alpha_1, \alpha_2,
  \ldots, \alpha_n}$ is also a descent of $\tau$, i.e., $\ad_{\psi} \ot\,
\binv$ is an involution on $M_n(F) \ot K$ isomorphic to $\tau$.  Thus, the orthogonal quadratic
extensions of $(B,\tau)$ can be decomposed as 
\begin{equation} \label{quad.1}
\As = {\Ad_\psi} \ot (Q, \gamma)
\end{equation}
where $\psi$ has trivial discriminant, $Q$ is a quaternion algebra split
by $K$, and $\gamma$ is the unique orthogonal involution on $Q$ whose restriction to $K$ is $\binv$.

By \ref{invts}, those $\As$ have trivial discriminant and Clifford invariant.  As the algebra has index 2, there is an Arason/$e_3$ invariant defined for it via the method of \cite{Berhuy:quat} or either of the methods in \cite{G:16}.  We find:
\begin{equation}
e_3 \As = [K] \cdot e_2(\psi) \quad \in H^3(F, \Zm2) / \qform{[Q]},
\end{equation}
for $\psi$ as in \eqref{quad.1} and where $[K]$ denotes the class of $K$
in $H^1(F, \mmu2)$ and $e_2(\psi)$ is the Clifford invariant of $\psi$.  To
prove this formula, one need only check it when $Q$ is split.  In that
case, ${\ad_\psi}\ot\gamma$ is adjoint to the quadratic form $\psi$ tensored
with the norm form of the quadratic extension $K/F$, and the formula is clear.
\end{eg}

\section{Generalization of Pfister's theorem}

From any central simple algebra $\Bt$ of even degree with unitary
involution, \ref{rho.lem} produces a central simple algebra
of degree $2n$ with trivial discriminant and Clifford invariant. 
We assert that for $n = 6$, this construction produces all such algebras of
degree 12. 

\begin{thm} \label{complete}
Let $\As$ be a central simple algebra with orthogonal involution,
where $A$ has degree $12$.  If $\s$ has trivial discriminant and
Clifford invariant, then $\As$ is a quadratic extension of some central simple algebra $\Bt$  of
degree $6$ and exponent $2$ with unitary involution and such that the
discriminant algebra $\D\Bt$ is Brauer-equivalent to $A$.
\end{thm}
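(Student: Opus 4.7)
The plan follows the principle highlighted in the introduction: the projective half-spin representation of $\Spin_{12}$ has an open orbit whose generic stabilizer is a form of type $A_5$. The idea is to use this open orbit to locate inside $A$ a $\s$-stable quadratic étale subalgebra $K=F[\delta]$ with centralizer $B$, then read off $\tau=\s|_B$ and verify the invariants.

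\textbf{Step 1: set up the half-spin representation.} Since $\s$ is of the first kind, $A$ has exponent dividing $2$. Triviality of $\disc\s$ gives $C(A,\s)=C_+\times C_-$ with $[C_+]-[C_-]=[A]$ in $\Br(F)$, and triviality of the Clifford invariant forces $[C_\pm]\in\{0,[A]\}$. Relabeling so that $C_+$ is split, write $C_+\cong\mathrm{End}_F(V)$ with $\dim_F V=2^5=32$. The composition $\Spin\As\to C_+^\times\to\mathrm{GL}(V)$ is then an $F$-form of the half-spin representation of $\Spin_{12}$.

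\textbf{Step 2: rational point in the open orbit.} Over a separable closure of $F$, the induced action of $\Spin_{12}$ on $\mathbb{P}(V)$ admits an open orbit $\Omega$ whose complement is the zero locus of an irreducible quartic relative invariant, and whose generic stabilizer has identity component $\mathrm{SL}_6$, by \cite{Igusa} and \cite{SK}. The open set $\Omega$ descends to an $F$-defined nonempty open subscheme of $\mathbb{P}(V)$, which contains an $F$-rational point $[v]$ since $\mathbb{P}(V)$ is $F$-rational (the finite-field case being handled by hyperbolicity of $\s$). Let $H\subset\Spin\As$ be the stabilizer of $[v]$: then $H^\circ$ is an $F$-form of $\mathrm{SL}_6$, and as an outer form of type $A_5$ it determines a quadratic étale $F$-algebra $K$ together with a central simple $K$-algebra $B$ of degree $6$ carrying a unitary $K/F$-involution $\tau$, satisfying $H^\circ\cong\SU\Bt$.

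\textbf{Step 3: extract $\delta$ and verify.} The embedding $H^\circ\hookrightarrow\Spin\As$ extends to an embedding of the Levi subgroup $L\cong\mathrm{U}\Bt$ (which contains $H^\circ$ as its derived subgroup) of a pair of opposite maximal parabolic subgroups $P,Q$ of $\Spin\As$ defined over $K$ and interchanged by $\mathrm{Gal}(K/F)$, exactly as described in the Remark after Proposition \ref{existence.prop}. By that Remark, this is equivalent to the existence of a $\s$-antisymmetric element $\delta\in A$ with $\delta^2\in\Fx$, generating $K=F[\delta]$ and having $B$ as centralizer in $A$. The claimed properties then follow: $\deg B=\frac{1}{2}\deg A=6$; $B\sim A\ot_F K$ has exponent dividing $2$ since $A$ does; and by the computation in \ref{invts}, the Clifford invariant of $\As$ agrees with $[\D\Bt]$ modulo $[A]$, so vanishing of the former gives $\D\Bt\sim A$. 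The delicate point is this passage from the abstract $F$-form structure on $H$ to concrete data inside $A$: one must verify that the pair $(P,Q)$ produced by $H$ really arises as the stabilizers of a pair of parallel $K$-rational totally isotropic subspaces coming from the eigenspaces of a genuine $\delta\in A$, rather than as merely abstract subgroups of $\Spin\As$---equivalently, the $1$-dimensional central torus of $L$ (the norm-$1$ subgroup of $K^\times$) must be realized as a torus induced from an element of $A^\times$.
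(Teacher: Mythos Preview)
Your overall strategy---use the open orbit in the projective half-spin representation to locate a subgroup of type $A_5$ inside $\Spin\As$ and from it extract $\Bt$---matches the paper's. But there are two genuine gaps.

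\textbf{Step 3 is incomplete, and you say so yourself.} You identify $H^\circ$ abstractly as $\SU\Bt$ for some $\Bt$ and then try to reverse the Remark after Proposition~\ref{existence.prop} to manufacture $\delta\in A$. That Remark only goes the other direction. The paper avoids this entirely: rather than working with $H$ abstractly, it takes the \emph{centralizer} $C$ of $H$ in $\SO\As$, which is a rank-$1$ torus (visible from the explicit $\SL_6\hookrightarrow\SO_{12}$ embedding over $\Falg$). Then $K$ is simply the quadratic \'etale algebra splitting $C$, and over $K$ the centralizer of $C(F)$ in $A\ot K$ is a product $B_+\times B_-$ of degree-$6$ algebras interchanged by $\s$. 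The element $\delta=(1,-1)\in B_+\times B_-$ is then given explicitly, with $\delta^2=1$ and $\s(\delta)=-\delta$ checked over $\Falg$. No appeal to parabolics or to reversing the Remark is needed.

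\textbf{The final sentence of Step 3 contains an actual error.} From \ref{invts} you only get that $[\D\Bt]$ and the Clifford invariant of $\As$ agree in $\Br(F)/[A]$. Triviality of the Clifford invariant therefore yields $[\D\Bt]\in\{0,[A]\}$, not $\D\Bt\sim A$. When $A$ is nonsplit and $\D\Bt$ happens to be split, the $\Bt$ you have found does \emph{not} satisfy the conclusion of the theorem, and you must produce a different one. The paper does this: in that case $B$ is split (by \cite[10.30]{KMRT}), so $\As\cong{\Ad_\psi}\ot(Q,\gamma)$ as in Example~\ref{quad.split} with $\disc\psi=1$ and $Q=(K,\lambda)$. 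Rescaling one basis vector by a pure quaternion $j$ with $j^2=\lambda$ replaces $\psi$ by $\psi'$ with $\disc\psi'=\lambda$, giving a new $(B,\tau')={\Ad_{\psi'}}\ot(K,\binv)$ whose discriminant algebra is $(K,\lambda)=Q\sim A$. Without this adjustment the proof is not finished.
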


It is an obvious corollary of the theorem that the central simple algebra $\As$ is split by an extension of degree dividing 4.  Roughly speaking, this says that the torsion index of the half-spin group in dimension 12 is 4, a result of Totaro's \cite[5.1]{Tot:E8tor}.

\begin{proof}[Proof of Th.~\ref{complete}]
If $A$ is split, then $\s$ is adjoint to a 12-dimensional quadratic form $q$ in $I^3$, which cannot have odd Witt index by Pfister's theorem for 10-dimensional quadratic forms in $I^3$ and the Arason-Pfister Hauptsatz.  Therefore, by \ref{image} it suffices to prove that $\s$ is hyperbolic over $F$ or a quadratic extension of $F$.  We may assume that $F$ is infinite.

As $\As$ has trivial discriminant and Clifford invariant, one of the half-spin representations $V$ of $\Spin\As$ is defined over $F$.  Over an algebraic closure $\Falg$, the group $\Spin\As$ has an open orbit $\mathcal{O}$ in $\proj(V)(\Falg)$, see \cite[p.~1012]{Igusa} or \cite{G:lens}.  But $F$ is infinite, so there is some $v \in V$ \emph{over $F$} such that $[v]$ belongs to $\mathcal{O}$.

We consider the image in $\SO\As$ of the stabilizer of $[v]$ in $\Spin\As$.  Its identity component $H$ is isomorphic over $\Falg$ to $\SL_6$ by \cite{Igusa}.  Indeed, over $\Falg$, we may identify $\SO\As$ with the special orthogonal group $\SO_{12}$ of the symmetric bilinear form 
\[
(x, y) \mapsto x^t \stbtmat{0}{1_6}{1_6}{0} y
\]
where $1_6$ denotes the 6-by-6 identity matrix.  As $[v]$ belongs to the orbit $\mathcal{O}$, up to conjugacy $H$ is the copy of $\SL_6$ in $\SO_{12}$ given by the inclusion
\[
a \mapsto \stbtmat{a}{0}{0}{a^{-t}}.
\]
From this, we see that the natural 12-dimensional representation of
$\SO_{12}$ decomposes as a direct sum of inequivalent 6-dimensional
representations of $\SL_6$.  The centralizer $C$ of $H$ in
$\SO\As$ is a rank $1$ torus; it consists over $\Falg$ of the matrices
\[
\stbtmat{\alpha 1_6}{0}{0}{\alpha^{-1} 1_6} \text{\ for $\alpha \in F_{{\mathrm{alg}}}^{\times}$.}
\]

So there is a field $K/F$ such that $[K:F] = 1$ or 2 that splits $C$.  The centralizer of $C(F)$ in $A \ot K$ is (as can be seen over $\Falg$) a product of central simple algebras $B_+ \times B_-$, where $B_+$ and $B_-$ have degree 6 and $\s$ interchanges the two factors.
Put $\delta := (1, -1)$ in $B_+ \times B_-$.  Obviously $\delta^2 = 1$ and $\s(\delta) = -\delta$, because these equations hold over $\Falg$.  It follows that $\As$ is hyperbolic over $K$.

\smallskip

This already proves that $\As$ is a quadratic extension of some $\Bt$
with unitary $K/F$ involution. Moreover, by~\ref{invts} the
discriminant algebra $\D\Bt$ has Brauer class $[A]$ or 0.  
Hence we are done except if $\D\Bt$ is split and $A$ is non split. 
If so, the algebra $B$ is split by~\cite[10.30]{KMRT}, and $\As$ is
of the special form described in Example \ref{quad.split}. 
We view $Q$ as generated by pure quaternions $\delta$ and $j$ that
anti-commute and such that $\delta^2 = d$ and $j^2 = \la$ for some
$\la \in \Fx$ such that $Q=(K,\la)$.  Let $\psi = \qform{\alpha_1,
  \alpha_2, \ldots, \alpha_6}$ be the quadratic form of discriminant 1
as in equation \eqref{quad.1}.  It naturally gives a
$\gamma$-hermitian form on $Q$.  Scaling the first basis vector (of
length $\alpha_1$) by $j$ gives an isomorphic $\gamma$-hermitian form
and changes the $\alpha_1$ in the diagonalization to $\gamma(j) \alpha_1 j = j^2 \alpha_1 = \la \alpha_1$; this form is obtained from the quadratic form $\psi' = \qform{\la \alpha_1, \alpha_2, \ldots, \alpha_6}$.  That is,
\[
\As \cong {\Ad_\psi} \ot (Q, \gamma) \cong {\Ad_{\psi'}} \ot (Q, \gamma).
\]
That is, $\As$ can also be constructed from $(B, \tau') = {\Ad_{\psi'}}\ot (K, \binv)$, for which the discriminant algebra is Brauer-equivalent to $(K, \disc \psi') = (K, \la) = Q$.
\end{proof}

\begin{cor}[Pfister's Theorem] \label{Pfister}
If $q$ is a $12$-dimensional form with trivial discriminant and Clifford invariant, then $q$ is isomorphic to $\phi \ot \psi$ for some $1$-Pfister form $\phi$ and $6$-dimensional quadratic form $\psi$ with trivial discriminant.
\end{cor}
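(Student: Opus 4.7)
The plan is to apply Theorem \ref{complete} to the split algebra with involution $\Ad_q$ and then trace through the structure provided by Example \ref{quad.split}. First, observe that since $q$ has dimension $12$, trivial discriminant and trivial Clifford invariant, the algebra $\Ad_q$ satisfies the hypotheses of Theorem \ref{complete}. The theorem produces a degree $6$, exponent $2$ central simple $K$-algebra with unitary involution $\Bt$ such that $\Ad_q$ is a quadratic extension of $\Bt$ and the discriminant algebra $\D\Bt$ is Brauer-equivalent to $M_{12}(F)$, i.e., $\D\Bt$ is split.

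Since $\D\Bt$ is split, \cite[10.30]{KMRT} forces $B$ itself to be split. Thus we are exactly in the situation of Example \ref{quad.split}: $\Bt$ descends to $\Ad_\psi \otimes (K, \binv)$ with $\psi$ a $6$-dimensional quadratic form of trivial discriminant, and the orthogonal extension takes the form
\[
\Ad_q \cong {\Ad_\psi} \otimes (Q,\gamma),
\]
where $Q$ is a quaternion algebra split by $K$ and $\gamma$ is the orthogonal involution on $Q$ restricting to $\binv$ on $K$. Because the left-hand side is split of degree $12$, the quaternion algebra $Q$ must itself be split. Writing $K = F[\sqrt{d}]$, the split quaternion algebra with orthogonal involution $\gamma$ whose $(-1)$-eigenspace contains an element of square $d$ is precisely $\Ad_\phi$ for the $1$-Pfister form $\phi := \qform{1,-d}$.

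Combining these identifications, $\Ad_q \cong \Ad_{\phi \otimes \psi}$, so $q$ is similar to $\phi \otimes \psi$: there exists $\lambda \in \Fx$ with $q \cong \lambda(\phi \otimes \psi) = \phi \otimes (\lambda \psi)$. The form $\lambda \psi$ still has dimension $6$, and its discriminant is $\lambda^6 \cdot \disc(\psi) = 1$ in $\Fsq$, so replacing $\psi$ by $\lambda \psi$ exhibits $q$ in the desired form $\phi \otimes \psi$ with $\phi$ a $1$-Pfister and $\psi$ a $6$-dimensional form of trivial discriminant. The only mildly nontrivial step is the last absorption of the similitude factor, which relies on the even dimension of $\psi$ to preserve the triviality of its discriminant; everything else is an immediate unpacking of the theorem and of Example \ref{quad.split}.
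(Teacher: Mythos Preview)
Your proof is correct and follows essentially the same route as the paper's. The only cosmetic difference is that you deduce $B$ is split from \cite[10.30]{KMRT} via the splitness of $\D\Bt$, whereas the more direct observation (implicit in the paper's writing $\Bt = (M_6(K),\tau)$) is that $B$ is Brauer-equivalent to $A\otimes K$, which is already split because $A$ is; otherwise your unpacking of Example~\ref{quad.split} and the absorption of the similitude factor into $\psi$ match the paper exactly.
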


\begin{proof} 
By Th.~\ref{complete}, $(M_{12}(F), \ad_q)$ is a quadratic extension
of $(M_6(K), \tau)$ for some quadratic \'etale $F$-algebra $K$ and
unitary involution $\tau$ with split discriminant algebra.  By Example
\ref{quad.split} and specifically \eqref{quad.1}, $q$ is similar to
the tensor product of the norm form of $K/F$ (call it $\phi$) with a 6-dimensional quadratic form $\psi$ of discriminant 1.  Replacing $\psi$ with a multiple does not change the discriminant, and it completes the proof.
\end{proof}

\begin{rmk} 
Theorem \ref{complete} says something about groups $G$ of type $E_7$ whose Tits index
(as defined in \cite{Ti:Cl}) is
\[
\begin{picture}(6.6,1.6)
    \multiput(0.3,0.3)(1,0){6}{\circle*{\darkrad}}
    \put(3.3,1.3){\circle*{\darkrad}}

    \put(0.3,0.3){\line(1,0){5}}
    \put(3.3,1.3){\line(0,-1){1}}
    
    \put(5.3,0.3){\circle{\lrad}}
%
    \end{picture}
\]
or has more vertices circled.  For such a $G$, the obvious subdiagram of type $D_6$ corresponds to a semisimple subgroup $H$ of type $D_6$, and the isogeny class of $H$ determines $G$ by Tits's Witt-type theorem \cite[Remark 2.7.2(d)]{Ti:Cl}.  It follows from \cite[\S5]{Ti:R} that $H$ is isogenous to $\SO\As$, where $\As$ is as in Theorem \ref{complete}.

We remark that such groups $G$ are the only remaining open case of the Kneser-Tits Problem over fields of cohomological dimension $\le 2$, see \cite[8.6]{Gille:KT}.
\end{rmk}

\medskip
\noindent{\small{\textbf{Acknowledgments.}  The first author's research was partially supported by National Science Foundation grant DMS-0654502.  Both authors would like to thank the Institut des Hautes Etudes Scientifiques for a pleasant working environment while some of the research for this paper was performed.}}


\providecommand{\bysame}{\leavevmode\hbox to3em{\hrulefill}\thinspace}
\providecommand{\MR}{\relax\ifhmode\unskip\space\fi MR }
\providecommand{\MRhref}[2]{%
  \href{http://www.ams.org/mathscinet-getitem?mr=#1}{#2}
}
\providecommand{\href}[2]{#2}

\end{document}